\theoremstyle{plain}
\newtheorem{thm}{Theorem}[section] 
\newtheorem{prop}{Proposition}
\newtheorem{cor}{Corollary}
\newtheorem{lem}[thm]{Lemma}
\theoremstyle{definition}
\newtheorem{defn}{Definition}
\newtheorem{exmp}{Example}
\lbrace\begin{array}{@{}l@{}}}  
\author{Giulia Palma}
\title{A study on the fixed points of the $\gamma$ function} 
\begin{document} 

\title{A study on the fixed points of the $\gamma$ function}

\author{A.~Frosini ~\thanks{Universit\`a
di Firenze, Dipartimento di Matematica e Informatica, Viale Morgagni
65, 50134 Firenze, Italy}
\and G.~Palma ~\thanks{Universit\`a di
Siena, Dipartimento di  Ingegneria dell'Informazione e Scienze
Matematiche, Via Roma 56, 53100 Siena, Italy}
\and E. Pergola $^*$
\and S.~Rinaldi $^\dagger$}

\maketitle

\begin{abstract}

Recently a permutation on Dyck paths, related to the chip firing game, was introduced and studied by Barnabei et al. \cite{BBCC}. It is called $\gamma$-operator, and uses symmetries and reflections to relate Dyck paths having the same length.  The study of the fixed points of $\gamma$ was carried on in  \cite{BBCC}, where the authors provided a characterization of these objects, leaving the problem of their enumeration open.
In this paper, using tools from combinatorics of words, we determine new combinatorial properties of the fixed points of $\gamma$. Then we present an algorithm, denoted by \textbf{GenGammaPath}($t$), which receives as input an array $t=(t_0, \ldots ,t_{k})$ of positive integers and generates all the elements of $F_{\gamma}$ with degree $k$.

{\sc Keywords: Dyck paths, enumerative combinatorics, generating functions}
\end{abstract}

\vspace{-.4cm}

\section{Introduction}\label{intro}

The recent study of the Riemann-Roch Theorem for graphs presented in
details by Baker and Norine in \cite{BN} gave rise to a series of
side researches that overflew the main stream of graph theory,
touching combinatorics on words and theory of formal languages, as
well. An equivalent presentation can be provided in terms of the
chip firing game played on a graph $G=(E(G), V(G))$: let us consider
an initial configuration where at each vertex $v\in V(G)$ is
assigned an integer number $f_v$ of coins both positive, and
negative. The objective of the game is to reach a positive
configuration $f=(f_1,\dots,f_n)$, i.e. a configuration where all
the vertices have a nonnegative number of coins, by using a sequence
of two types of moves that, for each vertex, consist either in
borrowing one coin from each of its neighbors or in giving one coin
to each of its neighbors. Such a configuration is called a {\em
winning configuration}, and each sequence of moves which leads to
such a configuration is a {\em winning strategy}. Let
$g=|E(G)|+|V(G)|-1$, Theorem~$1.9$ in \cite{BN} states that

\begin{thm}\label{teo:karin}
Let $N = deg(D)$ be the total number of coins present in the graph
$G$ at any step of the game. \item{1.} If $N \geq g$, then there is
always a winning strategy. \item{2.} If $N < g$, then there is
always an initial configuration for which no winning strategy
exists.
\end{thm}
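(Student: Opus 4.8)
The plan is to translate the chip-firing game into the divisor-theoretic language of Baker and Norine and then to deduce both assertions from the Riemann--Roch theorem for graphs. A configuration is identified with a divisor $D \in \mathbb{Z}^{V(G)}$, its total number of coins being the degree $\deg(D) = \sum_{v} D(v) = N$; the borrowing and lending moves generate exactly the relation of linear equivalence $\sim$. With this dictionary, a winning strategy exists for $D$ precisely when $D$ is linearly equivalent to an effective (nonnegative) divisor, which in turn is equivalent to $r(D) \ge 0$, where $r$ denotes the Baker--Norine rank ($r(D) = -1$ when no effective divisor is equivalent to $D$, and otherwise $r(D)$ is the largest $k$ such that $D - E$ remains winning for every effective $E$ of degree $k$). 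Thus the theorem becomes a statement about the sign of $r(D)$ as a function of $N = \deg(D)$.

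The engine of both parts is the Riemann--Roch identity
\[
 r(D) - r(K-D) = \deg(D) - g + 1,
\]
where $K$ is the canonical divisor $K(v) = \deg_G(v) - 2$, so that $\deg(K) = 2g-2$, and $g$ is the genus of $G$. For part~1 I would use only the inequality half (Riemann's inequality): since the rank of any divisor is at least $-1$, we have $r(K-D) \ge -1$, and hence $r(D) \ge \deg(D) - g$. When $N = \deg(D) \ge g$ this yields $r(D) \ge 0$, so $D$ is equivalent to an effective divisor and a winning strategy always exists, for every configuration of degree $N$.

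For part~2 I would argue by exhibiting, for each $N < g$, a single losing configuration. First I would establish the monotonicity property that removing coins cannot turn a losing position into a winning one: if $r(D) = -1$ and $E \ge 0$, then $r(D-E) = -1$, since $D - E \sim F \ge 0$ would force $D \sim E + F \ge 0$. It therefore suffices to produce one divisor $\nu$ of degree $g-1$ with $r(\nu) = -1$ and then subtract $(g-1) - N \ge 0$ coins from it. Such a \emph{non-special} divisor is furnished by any acyclic orientation $\mathcal{O}$ of $G$ with a unique source $q$: the divisor $\nu(v) = \operatorname{indeg}_{\mathcal{O}}(v) - 1$ has degree $g-1$, is $q$-reduced, and satisfies $\nu(q) = -1 < 0$, so it is not equivalent to any effective divisor.

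The main obstacle is the Riemann--Roch identity itself, which carries essentially all the difficulty. Proving it requires the combinatorial apparatus of $q$-reduced divisors (each linear-equivalence class contains a unique $q$-reduced representative, located by Dhar's burning algorithm), the characterization that a $q$-reduced $D$ is winning if and only if $D(q) \ge 0$, and the Baker--Norine symmetry argument matching maximal non-winning divisors $D$ with the divisors $K - D$. By contrast, once Riemann--Roch (or even just Riemann's inequality together with the existence of one non-special divisor) is in hand, parts~1 and~2 follow by the short deductions above.
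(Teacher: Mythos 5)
This theorem is not proved in the paper at all: it is background, quoted (as Theorem~1.9) from Baker and Norine \cite{BN}, so the only proof available for comparison is the one in that source. Your proposal reconstructs essentially that argument correctly --- configurations as divisors, borrowing/lending moves as linear equivalence, winnability as $r(D)\ge 0$, part~1 from the inequality $r(D)\ge \deg(D)-g$ (i.e.\ Riemann--Roch plus $r(K-D)\ge -1$), and part~2 from a non-special divisor of degree $g-1$ built from an acyclic orientation with unique source together with monotonicity under removing coins --- with the honest and accurate caveat, which you state yourself, that the Riemann--Roch identity carries the real difficulty and is assumed rather than proved. One discrepancy worth flagging: the paper defines $g=|E(G)|+|V(G)|-1$, which is a misprint for the genus $g=|E(G)|-|V(G)|+1$ used in \cite{BN}; your computations (e.g.\ $\deg(K)=2g-2$ and $\sum_{v}\left(\operatorname{indeg}_{\mathcal{O}}(v)-1\right)=|E(G)|-|V(G)|=g-1$) are consistent with the correct formula, not with the paper's.
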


In~\cite{cori}, the authors considered a restriction of the game to
complete graphs, and they studied the notion of rank of the winning
configurations. Basing on Theorem \ref{teo:karin}, they provided a
useful characterization of the winning configurations in terms of
parking configurations. Let us recall that, in a complete graph, a
sequence $(f_1,\dots,f_{n-1})$ is the initial sequence of a parking
configuration if and only if, after reordering, it is a weakly
increasing sequence $(g_1,\dots,g_{n-1})$ such that for each $i$ it
holds $g_i<i$.


In this paper we are concerned with an alternative characterization
of the winning configurations of complete graphs as fixed points of
an operator, called $\gamma$-operator, that geometrically acts on
Dyck paths \cite{BBCC}. As one can expect, Dyck paths may be related
to parking configurations in the sense that each length $k$ prefix
of their coding words contains a number of descending steps that is
upper bounded by $k-1$.

The authors of \cite{BBCC} present three permutations on Dyck words. The first one, $\alpha$, is related to the Baker and Norine theorem on graphs, the second one, $\beta$ is the symmetry, and the third one, $\gamma$, is the composition of these two. The fixed points of $\alpha$ and $\beta$ are not difficult to characterize, and the studies of \cite{BBCC} concentrate on the characterization of the fixed points of $\gamma$, showing combinatorial properties of its cycles.

In Section 2 we recall basic definitions and known results, mainly from \cite{BBCC}. In Section 3 we provide a new characterization of the fixed points of $\gamma$,  in terms of combinatorial properties (and in particular some recursive decomposition) of the words encoding them. The objective of this characterization is that of obtaining an algorithm for 
the generation of these fixed points. In Section 4, we first define the notion of {\em degree} of a fixed point of $\gamma$, then we write down an algorithm, denoted by \textbf{GenGammaPath}($t$), which receives as input an array $t=(t_0, \ldots ,t_{k-1})$ of positive integers and generates all the elements of $F_{\gamma}$ with degree $k$.

In the final section we investigate the relation between the degree and the length of an element of $F_\gamma$. We believe that the algorithm \textbf{GenGammaPath} can be used in some further research for exhaustive generation of the fixed points of $\gamma$, and also to study the generating function of these objects according to their length.

\vspace{-.4cm}

\section{Definitions and preliminary results}\label{sec:intro}

Let $w$ be a word on the free monoid $\Sigma^\star$, where
$\Sigma=\{a,b\}$. As usual, let $|w|$ denote the length of $w$, i.e.
the number of its letters, and let $|w|_a$
and $|w|_b$ denote the number of the occurrences of the letters $a$
and $b$ in $w$, respectively. Furthermore, to each word $w\in
\Sigma^\star$, we associate the integer number
$\delta(w)=|w|_a-|w|_b$. A word $u$ is a {\em prefix} of $w$ if, for
some $v$ we have $w=uv$; in this case $v$ is said to be a {\em
suffix} of $w$. Two words $w$ and $w'$ are {\em
conjugate} if there are words $u,v$ such that $w=uv$ and $w'=vu$.

Dyck words are an almost ubiquitous family of words which show
natural connections with a huge quantity of problems in different
scientific areas: more importantly for us, in
\cite{cori} it is shown a strict connections between Dyck words and
parking configurations on complete graphs.

\begin{defn}
A word $w\in \Sigma^\star$ of length $2n$ is a Dyck word if and only
if $\delta(w)=0$ and, for each prefix $v$ of $w$, it holds
$\delta(v)\geq 0$.
\end{defn}

\noindent By definition, for each Dyck word $w$ of length $2n$, it
holds $|w|_a=|w|_b=n$.

Let us consider the two sets of words $A_n$ and $D_n$ defined as
follows: $A_n$ contains any word $w$ of length $2n+1$ such that
$|w|_a=n$ and $|w|_b=n+1$, while the set $D_n$ is the set of Dyck
words followed by a single occurrence of $b$. By definition, we have
that, with $n>0$, $D_n\subset A_n$. A non trivial connection between
these two sets is established by the so called {\em Cycle Lemma},
illustrated in \cite{Dvore}, which can be stated as follows:

\begin{lem}{\bf (Cycle Lemma)}\label{lem:cyclic}
Let $w$ be an element of $A_n$. Then $w$ admits a unique
factorization $w=uv$ such that the conjugate word $w'=vu$ belongs to
$D_n$.
\end{lem}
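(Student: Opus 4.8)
The plan is to translate the problem into the language of lattice-path heights and to identify the desired rotation with the leftmost minimum of this height sequence. First I would record the following reformulation of membership in $D_n$: a word $w'$ of length $2n+1$ lies in $D_n$ if and only if $\delta(w')=-1$ and every proper prefix $v$ of $w'$ satisfies $\delta(v)\geq 0$. This is immediate from the definitions: writing $w'=w''b$, the requirement that $w''$ be a Dyck word is exactly that all prefixes of $w''$ (i.e. the proper prefixes of $w'$) be nonnegative and that $\delta(w'')=0$; since $\delta(w')=-1$ forces the last letter to be $b$ and hence $\delta(w'')=0$, the two descriptions coincide.

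Next, writing $w=x_1\cdots x_{2n+1}$ with $x_i\in\Sigma$, I would introduce the height function $h_0=0$ and $h_i=h_{i-1}+1$ when $x_i=a$, $h_i=h_{i-1}-1$ when $x_i=b$, so that the $\delta$ of the length-$i$ prefix of $w$ equals $h_i$ and, since $|w|_a=n$ and $|w|_b=n+1$, we have $h_{2n+1}=\delta(w)=-1$. I then extend $h$ to all integer indices by the rule $h_{i+(2n+1)}=h_i-1$, which encodes the $-1$ drift per period. For the rotation $w^{(j)}=x_{j+1}\cdots x_{2n+1}x_1\cdots x_j$ with $0\le j\le 2n$, a short computation splitting into the wrap-around case shows that the $\delta$ of its length-$i$ prefix is exactly $h_{j+i}-h_j$; in particular $\delta(w^{(j)})=h_{j+(2n+1)}-h_j=-1$ holds automatically for every $j$.

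Applying the reformulation, $w^{(j)}\in D_n$ if and only if $h_{j+i}-h_j\geq 0$ for all $1\le i\le 2n$, that is $h_k\geq h_j$ for every $k$ with $j<k\le j+2n$. Unwinding the periodic extension splits this condition into $h_j\le h_k$ for $j<k\le 2n$ together with $h_j<h_l$ for $0\le l<j$, where the strict inequality on the second family arises precisely from the $-1$ drift (an inequality $h_l-1\ge h_j$ between integers forces $h_l>h_j$). Taken together these say that $h_j$ is the minimum of $h_0,\ldots,h_{2n}$ and that $j$ is its leftmost occurrence. Such an index $j$ exists and is unique among $\{0,1,\ldots,2n\}$, and this yields both the existence and the uniqueness of the factorization $w=uv$ with $u=x_1\cdots x_j$, $v=x_{j+1}\cdots x_{2n+1}$ and $w'=vu\in D_n$.

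The step I expect to be the main obstacle is the careful bookkeeping around the wrap-around: one must verify that the prefix sums of a rotation are faithfully captured by differences $h_{j+i}-h_j$ of the drifted, periodically extended height function, and, crucially, that the $-1$ drift turns one family of inequalities strict so that the minimum-achieving position is pinned down uniquely rather than merely up to ties. Once this is handled correctly, existence and uniqueness fall out simultaneously from the existence and uniqueness of a leftmost minimum.
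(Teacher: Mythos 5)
Your proof is correct. Note, however, that the paper itself gives no proof of this statement: the Cycle Lemma is quoted from Dvoretsky and Motzkin \cite{Dvore}, so there is no internal argument to compare yours against. What you have written is a sound, self-contained version of the classical proof: reformulating membership in $D_n$ as ``$\delta=-1$ and all proper prefixes nonnegative,'' passing to the height sequence $h_i$ with the drifted periodic extension $h_{i+(2n+1)}=h_i-1$, and observing that the $-1$ drift converts the wrap-around inequalities into strict ones, so that $w^{(j)}\in D_n$ precisely when $j$ is the \emph{leftmost} position of the minimum of $h_0,\dots,h_{2n}$. This is exactly the point that delivers uniqueness, and your bookkeeping of the split $j<k\le 2n$ versus the wrapped indices $0\le l<j$ is accurate. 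It is also consistent with the paper's later remark (in the discussion of $\alpha$) that the unique conjugate in $D_n$ corresponds to cutting $\widetilde{w}$ at the point of lowest ordinate, which is the geometric counterpart of your leftmost-minimum criterion. One cosmetic point: strictly speaking the factorizations $w=uv$ with $v=\lambda$ and with $u=\lambda$ are distinct ordered pairs yielding the same conjugate; your restriction of the cut position to $0\le j\le 2n$ silently identifies them, which is the standard convention and is harmless, but a sentence acknowledging it would make the uniqueness claim airtight.
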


The Cycle Lemma states that the conjugacy relation induces a
partition on $A_n$ into equivalence classes whose minimal
lexicographical representatives are exactly the elements of $D_n$.

\subsection{Three permutations on the set $D_n$}

In this section, unless otherwise specified, we borrow notation and
definitions from \cite{BBCC}. We present two involutions $\alpha$
and $\beta$ on $D_n$, whose composition gives a permutation, called
$\gamma$, on which our study will be focused.

Few more definitions are needed: given a word $w=w_1 w_2 \dots w_m$, its {\em complement}
$\overline{w}$ is the word $\overline{w}_1\overline{w}_2\dots
\overline{w}_m$, where $\overline{w}_i$, with $1\leq i \leq m$,
exchanges the letter $a$ with $b$ and viceversa, its {\em
mirror} $\widetilde{w}$ is the word $w_m \dots w_2 w_1$, and its symmetric $Sym(w)$ is the complement of its mirror, i.e. the word $\overline{w}_{m} \dots \overline{w}_1$.

\medskip

\noindent \textit{The involution $\alpha$.} The function $\alpha$, introduced in \cite{CL}, maps an element $w$
of $D_n$ onto the unique conjugate of $\widetilde{w}$ that belongs
to $D_n$. By the cycle lemma, we know that there is exactly one such element. As an example, let the word $w=aabbaababaabbbb$ be an element of $D_n$. By definition,  $\widetilde{w}=bbbbaababaabbaa$ and its
conjugate is $\alpha(w)= aababaabbaa \:\: bbbb.$ In \cite{BBCC} the authors show that $\alpha$ is indeed an
involution on $D_n$.

\medskip

\noindent \textit{The involution $\beta$.} The function $\beta$ maps any element $w=w_1 w_2 \dots w_{2n+1}$ of
$D_n$ onto the word obtained by applying the symmetry operator to its first $2n$ letters, i.e.
$\beta(w)=Sym(w_1\dots w_{2n}) \: \overline{w}_{2n+1}.$ The fact that $\beta$ is an involution is immediate, since
it realizes the central symmetry of the first $2n$ elements of $w$,
as one can check by considering $w=aabbaababaabbbb\in D_n$. Then
$\beta(w)= \overline{bbbaababaabbaa}\: b=aaabbababbaabb\:\:b$
that still belongs to $D_n$.

\medskip

\noindent \textit{The permutation $\gamma$.}
Let $w$ be a word of $D_n$. The {\em principal prefix} (resp. {\em
principal suffix}) of $w$ is the shortest prefix (resp. suffix) $u$
of $w$ such that $\delta(u)$ is maximal.
Now, the mapping $\gamma$ is defined as the composition of $\alpha$
and $\beta$. Formally, with $w\in D_n$, we have:
$\gamma(w)=\alpha(\beta(w)).$
By definition, $\gamma$ acts on a word $w=u \:\: b$ of $D_n$, and
provides the unique word $w'$ in $D_n$ that is the conjugate of
$\overline{u}\: b$. It is easy to check that, if $w=u\: v \:b$, then
$\gamma(w)=\overline{v}\: b\: \overline{u}$, with $u$ being the
principal prefix of $w$.
The application of $\gamma$ to the word
$w=aabbaababaabbbb$ gives
$$\gamma(w)= \alpha(\beta(w))=\alpha(aaabbababbaabb\: b)=aaa \: bbbaabbababb.$$
The mapping $\gamma$ is a permutation of the words of
$D_n$. Actually $\gamma$ determines a partition of the words of
$D_n$ into classes, or {\em cycles}, that contain all the words that
can be obtained by iterated applications of $\gamma$.
Again in \cite{BBCC}, it was proved that each cycle induced by $\gamma$ has
odd cardinality; on the other hand, there are no results concerning
enumeration of the elements of the cycles with respect to their
length.

\smallskip

Dyck words can be naturally represented as lattice paths commonly
known under the name of {\em Dyck paths}. They are paths in the
first quadrant which begin at the origin, end at $(0,2n)$ and use
North-East steps $(1,1)$ ({\em rise} steps) and South-West steps
$(1,-1)$ ({\em fall} steps). The correspondence between Dyck words
and Dyck paths is obtained coding rise (resp. fall) steps with the
letter $a$ (resp. $b$). To understand the coding, see for instance the example in Fig.~\ref{fix_points},
which depicts the path associated with a word in $D_n$.
In a Dyck path, the {\em level} of a point of the path is its
ordinate; furthermore we call {\em peak} and {\em valley} any
occurrence in the related word of the sequence $ab$ and $ba$,
respectively.
We observe that the three mappings $\alpha$, $\beta$ and $\gamma$
defined above can be easily described in a graphical way using the
path representation of Dyck words. Such a representation also helps
us check some of the properties of the fixed points of $\alpha$,
$\beta$ and $\gamma$, which will be studied in this paper. So, from
now on, we will use the word representation and the path
representation of Dyck words indifferently.

\subsection{The fixed points of $\alpha$ and $\beta$}\
The following non trivial property, proved in \cite{BBCC}, provides
the characterization of the fixed points of $\alpha$:

\begin{prop} \label{2pal}
The word $w\in D_n$ is a fixed point for $\alpha$ if and only if $w$
is the concatenation of two palindromes. Moreover $w$ has a unique
decomposition as concatenation of two palindromes.
\end{prop}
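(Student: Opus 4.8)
The plan is to translate the fixed-point condition into a statement about conjugacy, and then to read off the palindrome decomposition directly from the conjugacy relation, using primitivity of the words of $D_n$ to force uniqueness.

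\emph{Reduction to conjugacy.} Since $w\in D_n\subset A_n$, its mirror $\widetilde w$ also lies in $A_n$, because passing to the mirror preserves the number of occurrences of each letter and the total length. By the Cycle Lemma, $\widetilde w$ has a unique conjugate lying in $D_n$, and by definition this conjugate is exactly $\alpha(w)$. Hence $\alpha(w)=w$ if and only if $w$ itself is a conjugate of $\widetilde w$: if $w$ is such a conjugate and $w\in D_n$, uniqueness in the Cycle Lemma forces $w=\alpha(w)$, while conversely $\alpha(w)=w$ already exhibits $w$ as a conjugate of $\widetilde w$. So I reduce the statement to the purely word-theoretic claim that $w$ is conjugate to $\widetilde w$ if and only if $w$ is a product of two palindromes.

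\emph{The equivalence.} For the easy direction, if $w=pq$ with $p,q$ palindromes, then $\widetilde w=\widetilde q\,\widetilde p=qp$, which is a conjugate of $pq=w$. Conversely, if $w$ is conjugate to $\widetilde w$, write $w=uv$ and $\widetilde w=vu$ as in the definition of conjugacy. Taking mirrors gives $\widetilde w=\widetilde v\,\widetilde u$, so $vu=\widetilde v\,\widetilde u$; comparing the two equal words on their common prefix of length $|v|=|\widetilde v|$ yields $v=\widetilde v$, and on their common suffix of length $|u|$ yields $u=\widetilde u$. Thus $u$ and $v$ are palindromes and $w=uv$. Both are nonempty: any $w\in D_n$ begins with $a$ and ends with $b$, so $w$ is not itself a palindrome, which rules out a trivial (empty) factor.

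\emph{Uniqueness.} This is the substantive part. Writing $\rho$ for the cyclic shift that moves the first letter of a word to its end, the computation above shows precisely that $w=pq$ with $p,q$ palindromes exactly when $\rho^{|p|}(w)=\widetilde w$, and the length $|p|$ recovers the factorization; so distinct palindrome factorizations correspond bijectively to distinct shift amounts $r\in\{1,\dots,|w|-1\}$ with $\rho^{r}(w)=\widetilde w$. If two such amounts $r_1\neq r_2$ existed, then $\rho^{\,r_1-r_2}(w)=w$, making $w$ invariant under a nonzero cyclic shift, i.e. a proper power $w=z^{k}$ with $k\geq 2$. The key obstruction, and the main point of the argument, is that this is impossible for $w\in D_n$: here $|w|_a=n$ and $|w|_b=n+1$, so a factorization $w=z^{k}$ would force $k\mid n$ and $k\mid n+1$, whence $k\mid\gcd(n,n+1)=1$. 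Therefore $w$ is primitive, at most one shift can carry $w$ onto $\widetilde w$, and the palindrome factorization produced above is unique. The delicate step is recognizing that the count of palindrome factorizations equals the count of cyclic shifts sending $w$ to $\widetilde w$; once this is in place, primitivity (from $\gcd(n,n+1)=1$) finishes the proof.
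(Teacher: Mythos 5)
Your proposal is correct, but there is nothing in this paper to compare it against: the paper states Proposition~\ref{2pal} as a known result, ``proved in \cite{BBCC}'', and gives no argument of its own. Judged on its merits, your proof is complete and self-contained, and it uses exactly the toolkit the paper provides. The reduction of $\alpha(w)=w$ to ``$w$ is conjugate to $\widetilde w$'' is sound: uniqueness of the factorization in the Cycle Lemma implies that the conjugacy class of $\widetilde w$ meets $D_n$ in exactly one word, namely $\alpha(w)$, so $w\in D_n$ being a conjugate of $\widetilde w$ forces $w=\alpha(w)$. The equivalence ``conjugate to its mirror $\iff$ product of two palindromes'' is handled correctly (from $vu=\widetilde v\,\widetilde u$, comparing the prefix of length $|v|$ and the suffix of length $|u|$ gives $v=\widetilde v$ and $u=\widetilde u$), and your uniqueness argument is the right one: palindromic factorizations of $w$ correspond bijectively to shift amounts $r$ with $\rho^{r}(w)=\widetilde w$, two such amounts would make $w$ invariant under a nontrivial cyclic shift and hence a proper power, and no word of $A_n$ can be a proper power since $k\mid n$ and $k\mid n+1$ force $k=1$. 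This primitivity observation, which the paper never makes explicit, is precisely what makes the ``moreover'' clause (uniqueness of the decomposition) work, and your write-up would serve as a legitimate self-contained replacement for the external citation.
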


Figure~\ref{fix_points}, $(a)$ shows a fixed point of $\alpha$; the
(unique) factorization of the path in two palindromes is pointed
out.
Note that, for each $w\in D_n$, the only conjugate $w'\in D_n$ of
$\widetilde{w}$ is such that $\widetilde{w}=uv$, $w'=vu$ and the
point of $u$ with the lowest ordinate is precisely its last point.
Concerning the involution $\beta$, each one of its fixed points can
be represented by a path $w=w_1 \dots w_{2n} \: b$ of $D_n$ such
that the prefix of length $2n$ is vertically symmetric, i.e.
$w=w_1\dots w_n \: Sym(w_1 \dots w_n) \:b$, as shown in
Fig.~\ref{fix_points}~$(b)$.

\begin{figure}[!h]
\begin{center}
\includegraphics[scale=0.7]{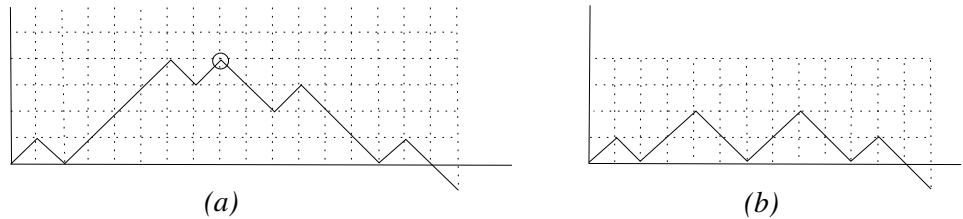}
\caption{\small Two examples of fixed points of the involutions $\alpha$, $(a)$,
and $\beta$, $(b)$. The decomposition in two palindromes
is highlighted.}\label{fix_points}
\end{center}
\end{figure}

So, let us consider the class $F_n\subset D_n$ of the fixed points
of $\gamma$ of length $2n+1$, and let $F_\gamma=\cup_{n=1}^\infty
F_n$. 


In what follows we will indicate with $w_i$ the generic path (in the case in which $w_i$ indicates the i-th component of the word it will be properly specified).
%
%
%

\section{Combinatorial properties of the fixed points of~$\gamma$}

We observe that a generic element $w\in F_\gamma$,
being a fixed point of $\beta$, can be written as $w= u \: a \: v
\: b\: {Sym(u)} \: b$ with $u\: a$ being its principal prefix, and
$b\: Sym(u) \: b$ its principal suffix.

\begin{prop}\label{prop:form}
Let $w=u \: a \: v \: b\: Sym(u) \: b$ be an element of $F_n$; the
following statements hold:
\begin{description}
\item{$(i)$} $\delta (u \: a) = M$, where $M$ is the maximal level of $w$, and $\delta (v) = 0$;

\item{$(ii)$} $u$ and $u\: a \: v$ are palindromes;

\item{$(iii)$} if $v=\lambda$, the empty word, then $w= a^n \: b^{n+1}$,
where the power notation $x^y$ stands for the usual concatenation of the string $x$ with itself $y$ times;

\item{$(iv)$} if $v\not = \lambda$, then $v$ can be decomposed as
$v=v_1 \: a\: v_2$, with $v_1$ and $v_2$ palindromes, and $v_2$
possibly empty. Furthermore, it holds $u=v_2 \: (a \: v)^t $,
with $t\geq 0$.
\end{description}
\end{prop}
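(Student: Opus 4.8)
The plan is to use that $w$, being a fixed point of $\gamma=\alpha\beta$ and (as recalled just before the statement) of $\beta$, is automatically a fixed point of $\alpha$ as well, since $\alpha(w)=\alpha(\beta(w))=\gamma(w)=w$. This lets me exploit simultaneously the two-palindrome decomposition of Proposition~\ref{2pal} and the $\beta$-symmetric form $w=w_1\cdots w_n\,Sym(w_1\cdots w_n)\,b$, which is exactly what produces the writing $w=u\,a\,v\,b\,Sym(u)\,b$. Part $(i)$ is then immediate: $\delta(u\,a)=M$ holds by the very definition of principal prefix, while $\delta(v)=0$ follows from the count $\delta(w)=\delta(u)+1+\delta(v)-1+\delta(Sym(u))-1$, using $\delta(Sym(u))=-\delta(u)$ and $\delta(w)=-1$.

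For $(ii)$ I would prove that the unique factorization of $w$ into two palindromes granted by Proposition~\ref{2pal} is precisely $w=(u\,a\,v)\,(b\,Sym(u)\,b)$. Each factor is then a palindrome; the right one is a palindrome if and only if $u$ is (since $\widetilde{Sym(u)}=\overline{u}$, so $b\,Sym(u)\,b$ is a palindrome iff $u=\widetilde u$), and the left one being a palindrome is the assertion that $u\,a\,v$ is a palindrome. To locate the split I would invoke the description of $\alpha$ recalled after Proposition~\ref{2pal} together with Lemma~\ref{lem:cyclic}: for a fixed point, $\widetilde w=x\,y$, $w=y\,x$, with the cut taken right after the first global minimum of $\widetilde w$. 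Translating heights through $\mathrm{level}_{\widetilde w}(j)=-1-\mathrm{level}_{w}(2n+1-j)$, that minimum corresponds to the \emph{last} occurrence of the maximal level $M$ in $w$. The key sub-step is that this last occurrence sits exactly at the end of $u\,a\,v$: since $\delta(v)=0$ the path returns to level $M$ there, and since $u\,a$ is the \emph{shortest} prefix attaining $M$, the maximal prefix-$\delta$ of $Sym(u)$ is $0$, so the trailing block $b\,Sym(u)\,b$ (starting at level $M-1$) never again reaches $M$. This forces $|x|=|u|+2$, hence $y=u\,a\,v$ and $x=b\,Sym(u)\,b$, giving $(ii)$.

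Part $(iii)$ is then a corollary: if $v=\lambda$, both $u$ and $u\,a$ are palindromes, so $a\,u=a\,\widetilde u=\widetilde{u\,a}=u\,a$, which forces $u\in a^{\star}$; writing $u=a^{k}$ and using $Sym(a^{k})=b^{k}$ yields $w=a^{k+1}b^{k+2}=a^{n}b^{n+1}$.

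The heart of the argument is $(iv)$, which I expect to be the main obstacle. From $(ii)$, $u\,a\,v$ is a palindrome having the palindrome $u$ as a border (a prefix, and by palindromicity a suffix), so it has period $|u\,a\,v|-|u|=|a\,v|$. The theory of periodic words then forces $u=v_2\,(a\,v)^{t}$, where $v_2$ is the front remainder of length $|u|\bmod|a\,v|$ and $t=\lfloor|u|/|a\,v|\rfloor$; reading this remainder off the period identifies $v_2$ with a suffix of $v$, and (since $v$ ends in $a$, because the palindrome $u\,a\,v$ begins with $a$) the letter preceding that suffix is the central $a$, giving the splitting $v=v_1\,a\,v_2$. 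It remains to check that $v_1,v_2$ are palindromes: here I would combine the palindromicity of $u$ with the self-symmetry of $v$ recorded by $\beta$, namely $\widetilde v=\overline v$ (equivalently $Sym(v)=v$), turning the block identity $v_2\,(a\,v)^{t}=(\overline v\,a)^{t}\,\widetilde{v_2}$ into $\widetilde{v_2}=v_2$ and $\widetilde{v_1}=v_1$. The delicate points, where most of the work lies, are the careful bookkeeping of periods and remainders and the exclusion of the degenerate alignment $|u|\equiv-1\pmod{|a\,v|}$ (which would leave no room for the central $a$), ruled out because $v$ begins with $b$ while a nonempty $u$ must begin with $a$.
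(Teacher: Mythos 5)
Your proposal is correct and takes essentially the same route as the paper: your border-implies-period argument for $(iv)$ is precisely the packaged form of the paper's letter-by-letter unwinding that alternates palindromicity of $u\,a\,v$ and of $u$ to reach $u=v_2\,(a\,v)^{t}$, after which $\widetilde{v_2}=v_2$ and $\widetilde{v_1}=v_1$ are extracted the same way, while your treatments of $(i)$ and $(iii)$, and your cycle-lemma-cut proof of $(ii)$ (the paper dismisses $(i)$--$(ii)$ as straightforward consequences of being fixed by both $\alpha$ and $\beta$), simply fill in details the paper omits. One small repair: when the remainder is nonempty, the claim that the letter of $v$ preceding $v_2$ is an $a$ does not follow from ``$v$ ends in $a$''; it follows instead by comparing the letter in position $|v_2|+1$ of the two readings $v_2\,(a\,v)^{t+1}=(\widetilde{v}\,a)^{t+1}\,\widetilde{v_2}$ of the palindrome $u\,a\,v$ --- machinery your sketch already contains.
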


\proof The proofs of $(i)$ and $(ii)$ are straightforward, and
follow from the fact that $w$ is a fixed point of both $\beta$ and
$\alpha$.
To prove $(iii)$ we observe that $v=\lambda$ implies that $u \:a$
and $u$ are both palindromes, by statement $(ii)$, so $u=a^{n-1}$.
Finally, the properties in $(iv)$ are direct consequences of the palindromicity of $u$ and $u \:a \: v$: let $v \not= \lambda$, it holds
\begin{description}
  \item[$u\: a \: v  =$ ] $\tilde{v}\:a \: \dots $, by palindromicity of $u\:a\:v$;
  \item[$u\: a \: v  =$ ] $\dots \: a\: v \:a \: v $, by palindromicity of $u$;
  \item[$u\: a \: v  =$ ] $\tilde{v}\:a \: \tilde{v}\: a \dots$, by palindromicity of $u\:a\:v$;
  \item[$\vdots \qquad \qquad \vdots \qquad \qquad \vdots$ ]
  \item[$u\: a \: v  =$ ]  $(\tilde{v}\:a)^t \: \tilde{v}_2 = v_2 \: (a\:v)^t$.
\end{description}
for some decomposition of $v$ as $v_1 \: a \: v_2$, with $v_2$
possibly empty. Hence, the two different ways of decomposing $u\:a
\:v$ directly lead to $v_2=\widetilde{v}_2$. Finally, by the
symmetry of $v$, we also deduce that $v_1=\widetilde{v}_1$, as
desired.  \qed


\medskip

\noindent{\bf Remark 1.} Let us now analyze the form of a generic element of $F_n$, with
$m_1$ (resp. $m_2$) being the level of the last (resp. first) step of
$v_1$ (resp.$v_2$).
Since, by assumption, the first and the last points of $v$ are at
the same maximal level $M$, and $v_1$ and $v_2$ are palindromes,
then the levels of the points of $v_1$ lie between $m_1$ and $M$,
and those of $v_2$ between $m_2$ and $M$. Observe in fact that, if a
step of $v_1$, resp. $v_2$, was below $m_1$, resp. $m_2$, then, due
to palindromicity, the symmetric step is above $M$, which is a
contradiction. As a consequence $m_1=m_2+1$, since $v_1$ and $v_2$
are connected by a rise step $a$.
We recall that $v$ is symmetric since it is the central part of $w$,
that is a fixed point of $\beta$. Two cases arise: either
$Sym(v_1)$ is a suffix of $v_2$ or viceversa. In the first case,
$v_2$ reaches the level $m_1$ that is lower than $m_2$ and this is a
contradiction, so necessarily we have that $Sym(a \: v_2)$ is a
prefix of $v_1$. \qed

\begin{cor}\label{coro}

Let $w=u \: a \: v \: b\: Sym(u) \: b$ be an element of $F_n$. Then the word $u a$ can be decomposed into two factors
$u_1$ and $u_2$, such that either $u_2\:\bar{u_1}$ or $\bar{u_2}\:u_1$ is palindrome.
\end{cor}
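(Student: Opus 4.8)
The plan is to reduce the two alternatives in the statement to a single condition. Complementation preserves palindromicity: if $z=\widetilde z$ then $\overline z=\overline{\widetilde z}=\widetilde{\overline z}$, and since $\overline{u_2\,\overline{u_1}}=\overline{u_2}\,u_1$, the word $u_2\,\overline{u_1}$ is a palindrome if and only if $\overline{u_2}\,u_1$ is. Hence it suffices to produce a single factorization $u\,a=u_1\,u_2$ for which $\overline{u_2}\,u_1$ is a palindrome, and the ``either\dots or'' merely records the two equivalent shapes this word can take.

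The structural input comes from Proposition~\ref{prop:form}$(iv)$ and Remark~1. Since $w$ is fixed by both $\beta$ and $\gamma$, cancelling the common suffix of the two descriptions of $w$ yields the word equation $u\,a\,v=\widetilde v\,a\,u$ together with the symmetry $\widetilde v=\overline v$ of the central block; combined with $u=v_2\,(a\,v)^t$ this gives the clean palindrome $u\,a\,v=v_2\,(a\,v)^{t+1}$, assembled from the palindromic blocks $v_1$ and $v_2$. Remark~1 supplies the decisive geometric fact: $Sym(a\,v_2)=\overline{v_2}\,b$ is a prefix of $v_1$, so $v_1=\overline{v_2}\,b\,r$, and substituting this into the symmetry equation forces $r$ itself to be symmetric. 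It is these relations, and not palindromicity alone, that pin down the admissible shapes of $v_2$: the level bounds of Remark~1 exclude palindromes $v_2$ whose complement would drive $v_1$ below level $m_1$.

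With this in hand I would place the cut of $u\,a$ at the position dictated by the prefix relation ($Sym(a\,v_2)$ a prefix of $v_1$), taking $u_1$ to be the prefix of $u\,a$ ending where the relevant copy of $v_2$ closes and $u_2$ the complementary suffix. Verifying that $\overline{u_2}\,u_1$ is a palindrome then reduces to the two relations $\overline{v_2}\,b\,r=\overline r\,b\,\overline{v_2}$ (from the symmetry of $v$) and $\overline{v_2}\,b\,r=\widetilde r\,b\,\overline{v_2}$ (from the palindromicity of $v_1$), whose combination gives $\widetilde r=\overline r$, read off against the block decomposition $u\,a=v_2\,(a\,v)^t a$. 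I expect the main obstacle to be exactly this folding check for general $t$: the single-block case $t=0$, where $u\,a=v_2\,a$, should follow directly from these relations together with the level bounds of Remark~1, and the general case should then follow by induction, peeling off one $a\,v$ block at a time and using that each block is symmetric around its palindromic pieces $v_1$ and $v_2$. The base case $v=\lambda$ is immediate, since there $u\,a=a^n$ and one takes $u_1=\lambda$.
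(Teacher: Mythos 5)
Your opening reduction is correct and is a genuinely nice observation: since complementation commutes with reversal, $\overline{u_2\,\overline{u_1}}=\overline{u_2}\,u_1$, so for a \emph{fixed} factorization $u\,a=u_1u_2$ the two alternatives in the statement are equivalent, and it suffices to exhibit one cut. Your structural relations are also correctly derived (palindromicity of $u\,a\,v$ and of $u$ plus symmetry of $v$ give $u\,a\,v=\overline{v}\,a\,u$; Remark~1 gives $v_1=\overline{v_2}\,b\,r$ with $r$ symmetric). The gap is in the step where you actually place the cut. Taking $u_1$ to be the prefix of $u\,a$ ending where a copy of $v_2$ closes is simply false, and this is visible on the smallest nontrivial fixed point: $w=abaababbabb\in F_\gamma$ (one checks $\alpha(w)=\beta(w)=w$), for which $u=aba$, $v=ba$, and in the notation of Proposition~\ref{prop:form}$(iv)$ one has $v_1=b$, $v_2=\lambda$, $t=1$. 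The copies of $v_2=\lambda$ close at positions $0$ and $3$ of $u\,a=abaa$, giving the cuts $(\lambda,abaa)$ and $(aba,a)$; none of the four words $abaa$, $babb$, $abab$, $baba$ is a palindrome. The correct cut is $u_1=a$, $u_2=baa$, which yields the palindromes $u_2\,\overline{u_1}=baab$ and $\overline{u_2}\,u_1=abba$, and this position is not of the form your rule produces.

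The deeper problem is your claim that the case $t=0$ ``follows directly'' from the listed relations, with the general case then handled by peeling off $a\,v$ blocks. The $t=0$ case is in fact where all the work lies, and it is not local: in the paper's own illustration of the corollary (Fig.~\ref{esempio4}) one has $u=v_2=ababaababa$ (so $t=0$), and the correct cut is $u_1=ab$, $u_2=abaababaa$ --- a position that no single application of Remark~1 can see, since the unique copy of $v_2$ closes at the end of $u$, not after two letters; the cut is determined by the structure of $v_2$ and $r$ several levels further down. This is exactly why the paper proves the corollary by re-running the \emph{iterated} device of Proposition~\ref{prop:form}: one exchanges the roles of $u$ and $v$ and alternates palindromicity of $u\,a\,v$ with the symmetry of $u\,a\,v\,b\,Sym(u)$, repeating until the residual factor is exhausted. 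The depth of that alternation is governed by the full recursive structure of the path (its degree, in the language of Section~4), not by the single parameter $t$, so an induction would have to recurse on that structure rather than on the number of $a\,v$ blocks. As written, your proposed cut is false, your base case is unsupported, and the inductive step is not carried out, so the argument does not go through.
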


The proof follows acting as in Proposition~\ref{prop:form}, after observing that the roles of $u$ and $v$ can be exchanged and a suitable decomposition can be obtained by alternating palindromicity of $u \: a \:v$ and symmetry of $u \: a \: v \: b\: Sym(u)$. 

As an example, the path in Fig.~\ref{esempio4} has a decomposition of the form  $(u\: a \: \bar{u} \: b)^t \: u \: a \: Sym(\tilde{u}_1) \: \tilde{u}_1 \: (b \:\bar{u} \: a \:u)^t$, with $t=0$. So, it turns out that $\tilde{u}_1 = ba$, $Sym(\tilde{u}_1)=\bar{u}_1=ba$, $u_2=abaababaa$ and the sub-word $u_2 \: \bar{u}_1= abaababaa \: ba$ is palindrome.

\begin{figure}[!h]
\begin{center}
\includegraphics[scale=0.8]{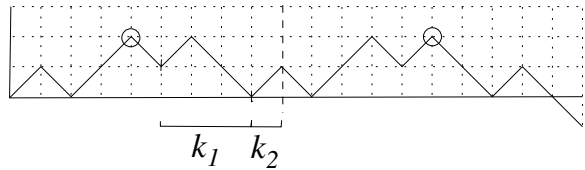}
\caption{\small A fixed point of $\gamma$ where $u=v_2$, and the decomposition of  Corollary~\ref{coro}.}\label{esempio4}
\end{center}
\end{figure}

\vspace{-1cm}

\section{The algorithm for the construction of $F_{\gamma}$ elements}

As a consequence of Corollary \ref{coro}, a generic Dyck word $w$ is an element of $F_{\gamma}$ if and only if 
\begin{enumerate}
\item $w$ is symmetric;
\item $w \: b$ is uniquely decomposable in two palindromes $\pi_1$ and $\pi_2$.
\end{enumerate}
A direct consequence of $(i)$ of Proposition \ref{prop:form} is that the decomposition of  $w \: b$ in the two palindromes $\pi_1$ and $\pi_2$ can be obtained by first defining the points $M$ and $M'$, where $M$ (resp. $M'$) is the leftmost (resp. rightmost) point of $w$ with greatest ordinate. \\ 
The path running from $(0,0)$ to $M'$ is $\pi_1$, while the one from $M'$ to the end of the path is $\pi_2$. \\
Figure \ref{fig:figura1} shows a path where the points $M$, $M'$, and the paths $\pi_1$ and $\pi_2$ have been highlighted.\\
\\
\begin{figure}[ht]
\begin{center}
\includegraphics[height=5cm] {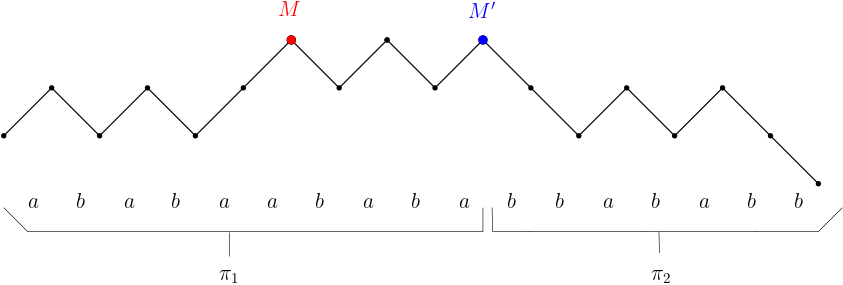}
\caption{A path where $M$, $M'$, $\pi_1$ and $\pi_2$ have been highlighted.}
\label{fig:figura1}
\end{center}
\end{figure}

Our aim in this section is to provide a recursive description of the elements of $F_{\gamma}$.
Given $w \in F_{\gamma}$ we first show that the following Proposition holds:
\begin{prop}\label{prop:decomposizione_cammino}
Let $w \in F_{\gamma}$ and $M$ and $M'$ defined as before. Then $w$ can be uniquely decomposed as $x \: z \: Sym(x)$, where z is the subpath running from $M$ to $M'$ and $\widetilde{z} \in F_{\gamma}$. \\
\\
\begin{figure}[h]
\centering
\includegraphics[height=3cm] {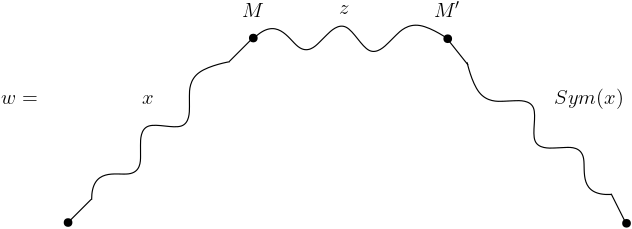}
\caption{Decomposition of the path $w$}
\label{fig:figura2}

\end{figure}
\end{prop}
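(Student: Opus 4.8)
The plan is to read the decomposition $w=x\,z\,Sym(x)$ directly off the structure $w=u\,a\,v\,b\,Sym(u)$ supplied by Proposition~\ref{prop:form}, and then to establish the recursive membership $\widetilde{z}\in F_\gamma$ by means of the characterisation recalled at the beginning of this section, namely that an element of $F_\gamma$ is exactly a symmetric Dyck path whose extension by a trailing $b$ splits uniquely into two palindromes. First I would locate $M$ and $M'$. By part $(i)$ of Proposition~\ref{prop:form} the principal prefix $u\,a$ is the shortest prefix with $\delta=M$, so its endpoint is precisely $M$; being a fixed point of $\beta$, $w$ is symmetric, so its height profile satisfies $h_k=h_{|w|-k}$, and this is exactly what forces the reflection to carry the ascent reaching $M$ onto the terminal descent issuing from $M'$, placing $M'$ at the endpoint of $u\,a\,v$. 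Setting $x=u\,a$ and $z=v$, the final factor is then $Sym(u\,a)=b\,Sym(u)=Sym(x)$, so that $w=x\,z\,Sym(x)$. Uniqueness is immediate, since $M$ and $M'$ are intrinsic to $w$ (its leftmost and rightmost points of maximal ordinate) and therefore so are $x$ and $z$.

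The heart of the argument is $\widetilde{z}\in F_\gamma$. Here I would first use that $z=v$ is symmetric (Remark~1), i.e. $Sym(v)=v$, which yields the key simplification $\widetilde{z}=\widetilde{v}=\overline{v}$; in particular $\widetilde z$ is the mirror of a top excursion and is therefore a genuine Dyck path, while the same identity gives $Sym(\overline v)=\overline v$, so that $\overline v\,b$ is a fixed point of $\beta$ and satisfies the first (symmetry) condition of the characterisation. It then remains to verify the second, $\alpha$-type condition, that $\overline v\,b$ is a concatenation of two palindromes.

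For this I would invoke the factorisation $v=v_1\,a\,v_2$ of part $(iv)$ of Proposition~\ref{prop:form}, with $v_1,v_2$ palindromes; complementing gives $\overline v=\overline{v_1}\,b\,\overline{v_2}$, whence $\overline v\,b=\overline{v_1}\cdot(b\,\overline{v_2}\,b)$, and both factors are palindromes because the complement of a palindrome is again a palindrome. Proposition~\ref{2pal} then guarantees that such a decomposition is unique, so $\overline v\,b$ is fixed by $\alpha$ as well; being fixed by both $\alpha$ and $\beta$, it is fixed by $\gamma=\alpha\circ\beta$, i.e. $\widetilde z\in F_\gamma$, which closes the recursion and lets the whole argument be iterated on $\widetilde z$. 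I expect the only delicate point to be the bookkeeping of the trailing $b$: the clean identity $w=x\,z\,Sym(x)$ holds for the length-$2n$ Dyck path underlying the $F_\gamma$ element, and the recursive statement must be read with the same convention, it being $\overline v\,b$ that is literally the fixed point of $\gamma$. Accordingly, the one thing to check with care is that the split $\overline{v_1}\cdot(b\,\overline{v_2}\,b)$ is consistent with the positions of the extreme maximal points of $\overline v$, i.e. that the prefix relation of Remark~1 (that $Sym(a\,v_2)$ is a prefix of $v_1$) is exactly what makes the two palindromes meet at the rightmost highest point of $\overline v$.
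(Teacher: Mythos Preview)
Your proposal is correct and follows essentially the same route as the paper's proof: both arguments deduce the symmetry of $z$ from that of $w$, and both obtain the required two-palindrome factorisation of $\widetilde{z}\,b$ from the decomposition $v=v_1\,a\,v_2$ of Proposition~\ref{prop:form}(iv), yielding the factors $\overline{v_1}$ and $b\,\overline{v_2}\,b$. Your write-up is in fact more careful than the paper's: you explicitly justify that $\widetilde{z}=\overline{v}$ is a genuine Dyck path, you track the trailing $b$ precisely, and you invoke Proposition~\ref{2pal} for uniqueness, none of which the paper spells out.
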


\begin{proof}

To prove that $\widetilde{z} \in F_{\gamma}$ is equivalent to showing that:
\begin{enumerate}
\item $z$ is symmetric,
\item $bz$ can be uniquely decomposed in two palindromes.
\end{enumerate}
The first claim immediately follows from the fact that $w \in F_{\gamma}$ and therefore it is symmetric. Indeed $M_1$ and $M_2$ are two points one symmetric to the other and then the subpath whose extremes are $M_1$ and $M_2$ is symmetric. \\
Let us now prove $2$. Since $z \neq \lambda$ (i.e. $\lambda$ is the empty path) and $w \in F_{\gamma}$, then it follows from $(iv)$ of Proposition \ref{prop:form} that $z$ can be decomposed as $z=v_1 \: a \: v_2$, $v_2$ possibly empty and $v_1$ and $v_2$ are palindromes. \\
Since $v_2$ is palindrome, $\widetilde{v_2}$ is palindrome too and then $\pi_2 = b \: \widetilde{v_2} \: b$ is palindrome. Moreover, $\pi_1=\widetilde{v_1}$ is palindrome, since $v_1$ is a palindrome as well. Therefore $\pi_1$ and $\pi_2$ are palindromes. \\
\begin{figure}[h]
\centering
\includegraphics[height=7.8cm] {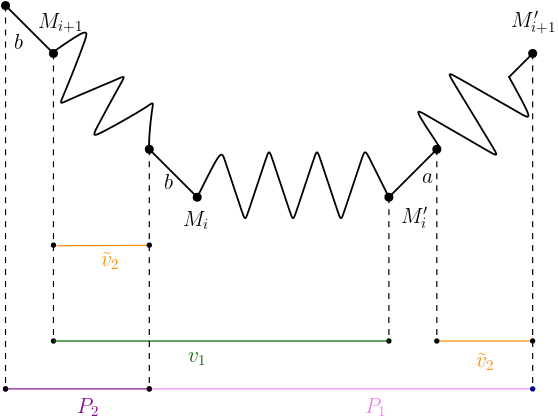}
\caption{A part of a path}
\label{fig:figura3}
\end{figure}
\end{proof}

We present an example of the previous decomposition.
\begin{exmp}
The path $w$ represented in Figure \ref{fig:figura1} can be uniquely decomposed in $xzy$, where
\[x= ababaa\]
\[z=baba\]
\[y=bbababb\]
and $\beta(z)=baba \in F_{\gamma}$.
\end{exmp} 
We now describe the following algorithm that generates all and only  the paths of $F_{\gamma}$:\\

\medskip 

\noindent \textbf{GenGammaPath}($t$)\\
\\
\textbf{Input:} an array $t=(t_0,\dots, t_{n})$ of nonnegative integers, with $t_0>0$.\\
\textbf{Output:} a path $w \in F_{\gamma}$ of degree $n$.\\
\\
\textbf{Part A} Let $n={2m}$.\\ \textbf{Initial step:} Let $u_0 = a^{t_0 -1}$ and $w_0 = a^{t_0} \: b^{t_0}$ \\
\textbf{For} $i = 1, 2, \dots, 2m$ \textbf{(Basic step):}
\begin{itemize}
\item If $i$ is even then:\\
$u_i = Sym(u_{i -1})(a \: w_{i -1})^{t_i}$ and $w_i = u_i \: a \: w_{i -1} \: b \: Sym(u_i)$;
\item If $i$ is odd then:\\
$u_i = Sym(u_{i-1})(b \: w_{i-1})^{t_i}$ and $w_i = u_i \: b \: w_{i-1} \: a \: Sym(u_i)$;
\end{itemize}
The algorithm returns the word  $w_{2m}$, and $2m$ is said the \textit{degree of w}.\\
\\
\textbf{Part B} Let $n=2m+1$.\\ \textbf{Initial step:} Let $u_0 = b^{t_0-1}$ and $w_0 = b^{t_0} \: a^{t_0}$\\
\textbf{For} $i = 1, 2, \dots , 2m + 1$ \textbf{(Basic step):}
\begin{itemize}
\item If $i$ is odd then:\\
$u_i = Sym(u_{i-1})(a \: w_{i-1})^{t_i}$ and $w_i = u_i \: a \: w_{i-1} \: b \: Sym(u_i)$
\item If $i$ is even then:\\
$u_i = Sym(u_{i-1})(b \: w_{i-1})^{t_i}$ and $w_i = u_i \: b \: w_{i-1} \: a \: Sym(u_i)$
\end{itemize}
The algorithm returns the word  $w_{2m+1}$, and $2m+1$ is said the \textit{degree of w}.\\
\\

We present an example of application of \textbf{GenGammaPath}($t$) to generate a path of degree $2$.

\begin{exmp}
Let us consider the path depicted in Figure \ref{fig:figura_camminoesempioalg}: \\
\begin{figure}[ht]
\begin{center}
\includegraphics[height=2cm] {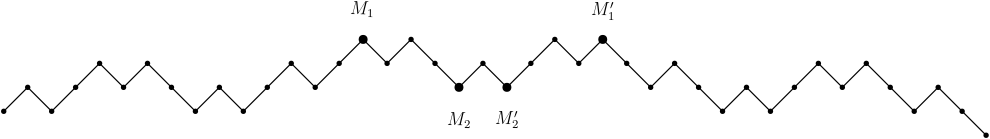}
\caption{A path $w \in F_{\gamma}$ and its decomposition as $w_0$, $w_1$, $w_2$}
\label{fig:figura_camminoesempioalg}
\end{center}
\end{figure}
Let us show how to generate $w$ using the algorithm \textbf{GenGammaPath}($t$), with $t=(1,1,1)$. 

\begin{description}
\item{Step 0.} Since $t_0=1$, then $u_0=\lambda$, and $w_0=a^{t_0}b^{t_0}=ab$\, .\\
\begin{figure}[ht]
\begin{center}
\includegraphics[height=1cm] {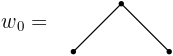} 
\qquad
\includegraphics[height=2.4cm] {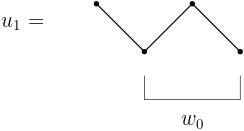}
\caption{The path $w_0$, on the left, and the path $u_1$, on the right.}
\label{fig:figuraw0}
\end{center}
\end{figure}
\item{Step 1.}  Since $t_1=1$, then:\\
$u_1=Sym(u_0)\:(b\:w_0)^{t_1}=\lambda(b \: w_0)^{t_1} =(bab)^{t_1}=bab$\\
$w_1=u_1 \: b \: w_0 \: a \: Sym(u_1)= bab \:b a b a \: Sym(bab)= babbabaaba$.\\
\begin{figure}[ht]
\begin{center}
\includegraphics[height=3cm] {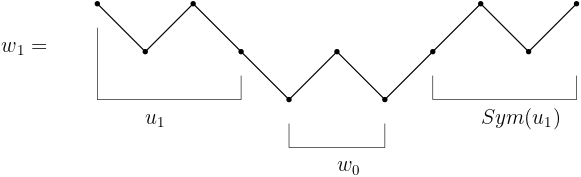}
\caption{The factorization of $w_1$ in $w$.}
\label{fig:figuraw1}
\end{center}
\end{figure}
\item{Step 2.} Since $t_2=1$, then
$$\begin{array}{l}
u_2=Sym(u_1)\:(a\:w_1)^{t_2}= Sym(bab) (ababbabaaba)^{t_2} = abaababbabaaba\\
w_2=u_2 \: a \: w_1 \: b \: Sym(u_2)=abaababbabaabaababbabaababbabbabaababbab.\\
\end{array}
$$

\begin{figure}[ht]
\begin{center}
\includegraphics[height=3cm] {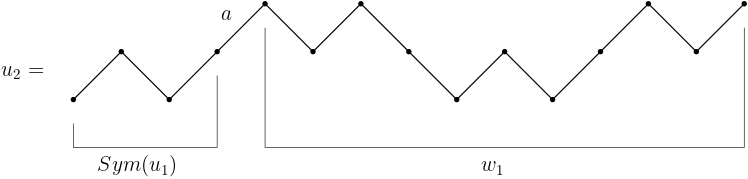}
\caption{The factorization of $u_2$ in $w$.}
\label{fig:figura1}
\end{center}
\end{figure}

\begin{figure}[ht]
\begin{center}
\includegraphics[height=3cm] {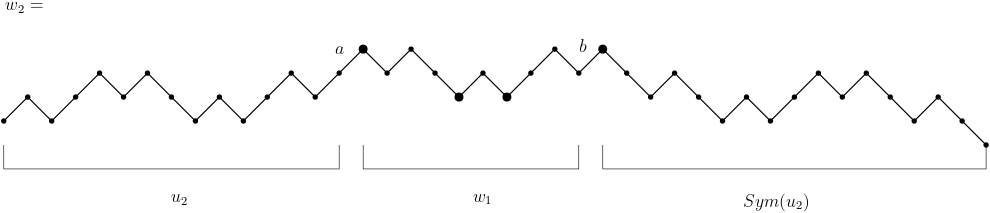}
\caption{The factorization of $w_2$ in $w$.}
\label{fig:figura1}
\end{center}
\end{figure}
Therefore, $w=abaababbabaabaababbabaababbabbabaababbabb$.\\
\end{description}
\end{exmp}

The following theorem holds:
\begin{thm}
A path $w \in F_{\gamma}$ if and only if it can be obtained by the \textbf{GenGammaPath}($t$).
\end{thm}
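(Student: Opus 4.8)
The plan is to prove both implications by induction on the degree $n$, exploiting the fact that one execution of the basic step of \textbf{GenGammaPath} is exactly the inverse of one application of the peeling described in Proposition~\ref{prop:decomposizione_cammino}. The bridge between the two is the following dictionary: in the form $w=u\,a\,v\,b\,Sym(u)$ produced by a wrapping such as $w_i=u_i\,a\,w_{i-1}\,b\,Sym(u_i)$, the inner path $w_{i-1}$ plays the role of the central block $v$, the factor $u_i$ plays the role of $u$, and the exponent $t_i$ is precisely the integer $t$ appearing in the decomposition $u=v_2\,(a\,v)^t$ of Proposition~\ref{prop:form}$(iv)$. Since mirroring reverses the sign of every height, each wrapping step flips the orientation of the path; this is why the base word and the alternation of the letters $a,b$ differ between Part A and Part B, the indices being chosen so that after $n$ steps the output $w_n$ is again an upward Dyck word.

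For the implication \emph{generated} $\Rightarrow$ \emph{fixed point} I would argue by induction on $i$, showing that the upward representative of each $w_i$ lies in $F_\gamma$. The base case is Proposition~\ref{prop:form}$(iii)$: the word $w_0\,b=a^{t_0}b^{t_0+1}$ is the fixed point with empty central block, and $u_0=a^{t_0-1}$ is its palindrome $u$. For the inductive step I would verify the two conditions of the characterization derived from Corollary~\ref{coro}. Symmetry of $w_i$ follows from the one-line computation $Sym(u_i\,a\,w_{i-1}\,b\,Sym(u_i))=u_i\,a\,Sym(w_{i-1})\,b\,Sym(u_i)$ (for $i$ even; the other case is analogous) together with the inductive symmetry of $w_{i-1}$ and with $Sym(w_0)=w_0$. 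The unique decomposition of $w_i\,b$ into two palindromes is obtained exactly as in the proof of Proposition~\ref{prop:decomposizione_cammino}: writing $w_{i-1}=v_1\,a\,v_2$ with $v_1,v_2$ palindromes yields $\pi_1=\widetilde{v_1}$ and $\pi_2=b\,\widetilde{v_2}\,b$, and the presence of the factor $(a\,w_{i-1})^{t_i}$ inside $u_i$ does not create a shorter admissible decomposition. Finally one checks that $w_i$ is a genuine Dyck path: the copies of $a\,w_{i-1}$ sit above the junction level and, by the height bounds of Remark 1, the descending suffix $Sym(u_i)$ never drops below $0$.

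For the converse, given $w\in F_\gamma$ of degree $n$ I would peel it by Proposition~\ref{prop:decomposizione_cammino}, writing $w=x\,z\,Sym(x)$ with $\widetilde z\in F_\gamma$, and read off the data of the outermost layer: the letter separating the blocks (which fixes the parity rule), and the exponent $t$ of $u=v_2\,(a\,v)^t$ from Proposition~\ref{prop:form}$(iv)$, which becomes $t_n$; I would then recurse on $\widetilde z$, whose degree is one smaller. The recursion terminates at the base word $a^{t_0}b^{t_0}$, the Dyck part of the fixed point $a^{t_0}b^{t_0+1}$ of Proposition~\ref{prop:form}$(iii)$, whose width records $t_0$. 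The array $t=(t_0,\dots,t_n)$ extracted in this way is, by construction, exactly the input for which \textbf{GenGammaPath} rebuilds $w$; uniqueness of the decomposition in Proposition~\ref{prop:decomposizione_cammino} guarantees that the $t_i$ are well defined, so the correspondence is in fact a bijection.

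I expect the main obstacle to be the rigorous matching of a single wrapping step with a single peeling step, rather than either implication in isolation. Concretely, one must prove that the maximal-level points $M,M'$ of the wrapped word $w_i$ fall exactly at the two ends of the inner block $a\,w_{i-1}\,b$, so that peeling $w_i$ returns precisely $w_{i-1}$ (up to mirror) and not some longer or shorter sub-path; this is where the height estimates of Remark 1 — that the points of $v_1$ and $v_2$ stay between $m_1,m_2$ and $M$ — are essential, and where $t_i$ must be shown to be the \emph{unique} nonnegative integer compatible with the decomposition. The remaining care is purely in the index bookkeeping: tracking which of $a,b$ plays the role of the central rise at each level, and checking that the parity conventions of Part A and Part B are exactly those that return an upward path of the prescribed degree.
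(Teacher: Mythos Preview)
Your overall strategy --- induction on the degree, identifying one basic step of \textbf{GenGammaPath} with the inverse of one peeling via Proposition~\ref{prop:decomposizione_cammino}, and using Proposition~\ref{prop:form}$(iv)$ to read off the exponent $t_i$ --- is exactly the paper's approach, and your $(\Rightarrow)$ direction matches it closely.

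There is, however, a concrete error in your $(\Leftarrow)$ inductive step. You claim that the palindrome decomposition of $w_i\,b$ is $\pi_1=\widetilde{v_1}$, $\pi_2=b\,\widetilde{v_2}\,b$ with $w_{i-1}=v_1\,a\,v_2$. But $\pi_1\pi_2=\widetilde{v_1}\,b\,\widetilde{v_2}\,b$ has length $|w_{i-1}|+1$, not $|w_i|+1$: these two pieces do not cover $w_i\,b$ at all. You have transplanted the formulas from the proof of Proposition~\ref{prop:decomposizione_cammino}, where they give a decomposition of the \emph{inner} block (there one must show $b\,z$ splits into palindromes in order to conclude $\widetilde z\in F_\gamma$); here you need to decompose the \emph{outer} word $w_i\,b=u_i\,a\,w_{i-1}\,b\,Sym(u_i)\,b$, and Proposition~\ref{prop:decomposizione_cammino} cannot be invoked since $w_i\in F_\gamma$ is precisely what you are trying to establish.

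The correct candidates are $\pi_1=u_i\,a\,w_{i-1}$ and $\pi_2=b\,Sym(u_i)\,b$, and showing that each is a palindrome is the actual technical content of this direction. Both reduce to proving that $u_i=Sym(u_{i-1})\,(a\,w_{i-1})^{t_i}$ and $u_i\,a\,w_{i-1}=Sym(u_{i-1})\,(a\,w_{i-1})^{t_i+1}$ are palindromes. This in turn requires expanding $w_{i-1}=u_{i-1}\,b\,w_{i-2}\,a\,Sym(u_{i-1})$ and using, from the inductive hypothesis on $w_{i-1}$ together with Proposition~\ref{prop:form}$(ii)$, that both $u_{i-1}$ and $u_{i-1}\,b\,w_{i-2}$ are already palindromes; the paper carries this out by writing the expansion for small $t_i$ and then observing the general alternating pattern $Sym(u_{i-1})\,a\,(u_{i-1}\,b\,w_{i-2})\,a\,Sym(u_{i-1})\,a\,\dots$. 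Your sketch skips this computation, and the reference to Proposition~\ref{prop:decomposizione_cammino} does not fill the gap.
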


\begin{proof}
First of all we will prove that all the paths generated by \textbf{GenGammaPath}($t$) are elements of $F_{\gamma}$, then we will show that all elements of $F_{\gamma}$ are effectively generated by the \textbf{GenGammaPath}($t$).\\
$(\Leftarrow)$ Given an element $w$ generated by \textbf{GenGammaPath}($t$), we prove by induction on the degree $i$ of $w$ that it belongs to $F_{\gamma}$. \\

\medskip

\noindent\textbf{Base step:} If $i=0$, then $w_0=a^{t_0}b^{t_0}$,  $t_0\geq 1$. \\


The path $w_0$ is trivially symmetric and $w_0 \: b$ is uniquely decomposable into two palindromes $\pi_1$ and $\pi_2$, hence $w_0 \in F_{\gamma}$.\\

\noindent\textbf{Inductive step:} Let us assume that our claim holds for $i = n$ and let us show that it also holds for $i = n + 1$. \\
So, let $w_{n+1}$ be a path of degree $n+1$ produced by \textbf{GenGammaPath}($t$). To prove that it belongs to $F_{\gamma}$ we have to prove that:
\begin{enumerate}
\item  $w_{n+1}$ is symmetric,
\item  $w_{n+1} \: b$ is uniquely decomposable into two palindromes $\pi_1$ and $\pi_2$.
\end{enumerate}
Since the case in which $n+1$ is odd is identical to the case in which $n+2$ is even (we only have to exchange $a$ and $b$ and vice versa), we will treat, without loss of generality, only the case in which $n + 1$ is even. \\
By our construction, the path produced by \textbf{GenGammaPath}($t$) is 
\[w_{n+1}=u_{n+1} \; a \; w_n \; b \; Sym(u_{n+1})\, .\]
Moreover, we are assuming that it holds for $n$, then by the inductive argument we have that $w_n \in F_{\gamma}$. 
The we have:
\begin{enumerate}
\item $w_{n+1}$ is trivially symmetric since:
\[w_{n+1} = u_{n+1} \: a \: w_n \: b \: Sym(u_{n+1})\, ,\]
where $u_{n+1} \: a$ and $b \: Sym(u_{n+1})$ are symmetric to each by construction other and $w_n$ is symmetric since $w_n \in F_{\gamma}$. Therefore $w_{n+1}$ is symmetric since it is obtained from a symmetric path by attaching two symmetric pieces to the extremities.
\item Now we show that $w_{n+1} \; b$ is decomposable in two palindromes (for simplicity let us indicate $n+1$ by $\ell$). More precisely, we will show that this decomposition is obtained as $\pi_1$ and $\pi_2$, where $\pi_1 = u_{\ell} \: a \: w_{\ell-1}$ and $\pi_2 = b \: Sym(u_{\ell}) b$.

Let us start showing that $\pi_2$ is palindrome. We have that:
 
$\pi_2= b \: Sym(u_{\ell}) b$ is palindrome if and only if 

$Sym(u_{\ell})$ is palindrome if and only if 

$u_{\ell}$ is palindrome if and only if 

$Sym(u_{\ell-1})\: (a \: w_{\ell-1})^{t_{\ell}}$ is palindrome. \\

Therefore, let us show that $Sym(u_{\ell-1})\: (a \: w_{\ell-1})^{t_{\ell}}$ is palindrome. \\
We know that the building process has started with $w_0=a^{t_0}b^{t_0}$ and we assume that $\ell>0$ is even, then $\ell-1$ is odd. \\
We recall thar $w_{\ell-1}=u_{\ell-1} \: b \: w_{\ell-2} \: a \: Sym(u_{\ell-1})$.
By inductive hypothesis $w_{\ell-1} \in F_{\gamma}$ then by $(ii)$ of Proposition \ref{prop:form}, $u_{\ell-1}$ and $u_{\ell-1} \: b \: w_{\ell-2}$ are palindromes. Let us consider some small values of $t_\ell$:
If $t_{\ell}=0$, then 
\[u_{\ell} = Sym(u_{\ell-1}) \: (a \: w_{\ell-1})^0= Sym(u_{\ell-1}).\]
Since $u_{\ell-1}$ is palindrome, also $Sym(u_{\ell-1})$ is palindrome. \\
Let us consider some small values of $t_{\ell}$. 
If $t_{\ell}=1$, then 
\begin{eqnarray*}
u_{\ell} &=& Sym(u_{\ell-1})(a \: w_{\ell-1}) \\
 &=& Sym(u_{\ell-1}) \: a \: u_{\ell-1} \: b \: w_{\ell-2} \: a \: Sym(u_{\ell-1}).
 \end{eqnarray*}
 
Since $Sym(u_{\ell-1})$ is palindrome (indeed $u_{\ell-1}$ is palindrome) and $u_{\ell-1} \: b \: w_{\ell-2}$ is palindrome. \\
If $t_{\ell}=2$, then 
\begin{eqnarray*}
u_{\ell} &=& Sym(u_{\ell-1}) \: (a \: w_{\ell-2})^2 \\
&=& Sym(u_{\ell-2}) \: (a \: w_{\ell-1}) \: (a \: w_{\ell-1}) \\
&=& Sym(u_{\ell-2}) \: a \: u_{\ell-1} \: b \: w_{\ell-2} \: a \: Sym(u_{\ell-1}) \: a \: u_{\ell-1} b \: w_{\ell-2} \: a \: Sym(u_{\ell-1}).
\end{eqnarray*}
The latter is palindrome indeed $Sym(u_{\ell-1})$ is palindrome (since $u_{\ell-1}$ is palindrome) and $u_{\ell-1} \: b \: w_{\ell-2}$ is palindrome. \\
In general, if $\ell$ is even: 
\[u_{\ell} = Sym(u_{\ell-1}) \: a u_{\ell-1} \: b \: w_{\ell-2} \: a \: Sym(u_{\ell-1}) \: a \: u_{\ell-1} \: b \: w_{\ell-2} \: \dots a ,\]
\[Sym(u_{\ell-1}) \: a \dots u_{\ell-1} \: b \: w_{\ell-2} \: a \: Sym(u_{\ell-1}) \: a \: u_{\ell-1} \: b \: w_{\ell-2} \: a \: Sym(u_{\ell-1}).\]
The latter is palindrome indeed $Sym(u_{\ell-1})$ is palindrome (since $u_{\ell-1}$ is palindrome) and $u_{\ell-1} \: b \: w_{\ell-2}$ is palindrome. \\
Instead, if $\ell$ is odd: \\
\[u_{\ell} = Sym(u_{\ell-1}) \: a \: u_{\ell-1} \: b \: w_{\ell-2} \: a \: Sym(u_{\ell-1}) \: a \: u_{\ell-1} \: b \: w_{\ell-2} \dots u_{\ell-1} \: b,\]
\[w_{\ell-2} \dots u_{\ell-1} \: b \: w_{\ell-2} \: a \: Sym(u_{\ell-1}) \: a \:u_{\ell-1} \: b \: w_{\ell-2} \: a \: Sym(u_{\ell-1}).\]
The latter is palindrome indeed $Sym(u_{\ell-1})$ is palindrome (since $u_{\ell-1}$ is palindrome) and $u_{\ell-1} \: b \: w_{\ell-2}$ is palindrome. \\
Therefore in both cases $\pi_2$ is palindrome. \\
Let us show that also $\pi_1 = u_{\ell} \: a \: w_{\ell-1}$ is palindrome. \\
Also in this case, by considering the smallest values of $t_{\ell}$ helps us in finding the proof. \\
If $t_{\ell}=0$, then \\
\begin{eqnarray*}
\pi_1 &=& Sym(u_{\ell-1}) \; \lambda \; a \; u_{\ell-1} b \; w_{\ell-2} \; a \; Sym(u_{\ell-1})\\
&=& Sym(u_{\ell-1}) \; a \; u_{\ell-1} \; b \; w_{\ell-2} \; a \; Sym(u_{\ell-1}).
\end{eqnarray*}
The latter is palindrome indeed $Sym(u_{\ell-1})$ is palindrome (since $u_{\ell-1}$ is palindrome) and $u_{\ell-1} \: b \: w_{\ell-2}$ is palindrome. \\
If $t_{\ell}=1$, then 
\[\pi_1=u_{\ell} \; a \; w_{\ell-1} = Sym(u_{\ell-1})\; (a \; w_{\ell-1}) \; a \; w_{\ell-1}=Sym(u_{\ell-1})(a\: w_{\ell-1})^2.\]
If $t_{\ell}=2$, then
\begin{eqnarray*}
\pi_1&=& u_{\ell} \; a \; w_{\ell-1} \\
&=& Sym(u_{\ell-1}) \; (a \; w_{\ell-1})^2 \; a \; w_{\ell-1}\\
&=& Sym(u_{\ell-1}) \; (a \; w_{\ell-1}) \; (a \; w_{\ell-1}) \; (a \; w_{\ell-1}) \\
&=& Sym(u_{\ell-1}) \; (a \; w_{\ell-1})^3.
\end{eqnarray*}
In general, given $i$, 
\[\pi_1=u_{\ell} \; a \; w_{\ell-1} = Sym(u_{\ell-1})\:(a \; w_{\ell-1})^{t_{\ell}} (a \; w_{\ell-1})= Sym(u_{\ell-1})\: (a \; w_{\ell-1})^{t_{\ell+1}}. \]
Therefore, we can prove that $\pi_1$ is palindrome reasoning in an analogous way to what we did in the case of $\pi_2$.
\end{enumerate}

\noindent$(\Rightarrow)$ Let us now prove that every path $w$ that belongs to $F_{\gamma}$ is obtained by \textbf{GenGammaPath}($t$). \\
First of all, if $w$ contains only one peak (resp. valley) then it is $w_0=a^{t_0}b^{t_0}$ (resp. $w_0=b^{t_0}a^{t_0}$) and therefore it has been produced by \textbf{GenGammaPath}($t$). \\
Let us now assume that $w$ contains more than one peak then, for reasons of symmetry, if it has an even (resp. odd) number of peaks then it has a valley (resp. peak) at the center of the path. \\
The two cases are identical, it is only a matter of exchanging the $a$ and $b$ and vice versa. So without loss of generality, we can only consider the case in which there is an odd number of peaks. \\

We are now ready to show that if $w \in F_{\gamma}$ then it is obtained by \textbf{GenGammaPath}($t$). \\
Since $w \in F_{\gamma}$, for Proposition \ref{prop:decomposizione_cammino}, $w$ can be uniquely decomposed into $x\: z \: Sym(x)$ \\
where $\widetilde{z} \in F_{\gamma}$ and $M$ (resp. $M'$) is the highest leftmost (resp. rightmost) point of $w$. \\
Therefore $w=u \: a \: z \: b \: Sym(u)$. By definition, $M$ is preceded by an up step, $a$, and $M'$ is followed by a down step, $b$.
From the Proposition \ref{prop:decomposizione_cammino}, we get that $z$ is a path of $F_{\gamma}$. Since $w$ is obtained from $z$ by attaching at the extremities of $z$, $\Omega =u \: a$ and $Sym(\Omega)= b \: Sym(u)$, then there exists an index $i$ such that $w$ is $w_i$ and that $z$ is $w_ {i-1}$. \\
Since $Sym(u)$ is the symmetric of $u$ as $w \in F_{\gamma}$ by hypothesis, then by definition of element belonging to $F _ {\gamma}$, $w$ is symmetric. Then $w$ has the form of the paths produced by \textbf{GenGammaPath}($t$). \\
To conclude that a generic element of $F_{\gamma}$ is produced by \textbf{GenGammaPath}($t$), it is enough to show that $u$ is of the form $u_i=Sym(u_{i-1})(a\: w_{i-1})^{t_i}$. \\
Since $w_i \in F_{\gamma}$, by $(ii)$ of Proposition \ref{prop:form}, $u \: a \: w_{i-1}$ is palindrome and by $(iv)$ of Proposition \ref{prop:form} $w_{i-1}$ can be decomposed in $w_{i-1}=v_1 \: a \: v_2$, where $v_1$ and $v_2$ are palindromes and $v_2$ is eventually void. \\
$u \: a \: w_{i-1}$ is palindrome, $w_{i-1}$ is an inverted path and then $v_2=Sym(u_{i-1})$.\\
From the fact that $w \in F_{\gamma}$ and that $v \neq \lambda$, it follows that by $(iv)$ of Proposition \ref{prop:form}, $u=v_2\:(a\:v)^t$ with $t \geq 0$. \\
From the fact that $v_2=Sym(u_{i-1})$ and that $u=v_2\:(a\: w_{i-1})^t$, it follows that $u=Sym(u_{i-1})(a \: w_{i-1})^{t_i}$. \\
Therefore every element of $F_{\gamma}$ is obtained by \textbf{GenGammaPath}($t$).
\end{proof}  

\section{Conclusion and further works}
The algorithm \textbf{GenGammaPath}($t$) we have presented in the previous section produces a path $w=w(t)$ on input $t=(t_0, \ldots ,t_{n})$. A possible direction for further research is to use \textbf{GenGammaPath} to obtain a {\em Constant Amortized Time} (CAT) algorithm for the generation of the fixed points of $\gamma$. 
In order to prove that each of these paths is generated in constant amortized time we need to obtain more information about the enumeration of  fixed points of $\gamma$ according to their length. In particular, we expect to determine an explicit formula $p(n)$ that associates to an array $t$ of length $n$ the length of the path $w(t)$ obtained performing \textbf{GenGammaPath}($t$). 
By some preliminary investigation we have that the following recurrence relation holds:
$$\begin{array}{l}
p(0)=2t_0  \\
p(n+1)=p(0)+p(n)+2 \sum_{i=0}^n t_{i+1} ( p(i) + 1) \, .
\end{array}
$$

\end{document}